\tikzstyle{vertex}=[auto=left,circle,fill=black!0,minimum size=0pt,inner sep=0pt]
\newtheorem{theorem}{Theorem}[section]
\newtheorem{lemma}[theorem]{Lemma}
\newtheorem{proposition}[theorem]{Proposition}
\theoremstyle{definition}  
\newtheorem{definition} [theorem] {Definition}
\newcommand{\Z}{{\mathbb{Z}}}
\newcommand{\Diam}{\mathrm{diam}}
\newcommand{\Link}{N_G}
\newcommand{\Linkp}{N_G^>}
\newcommand{\Linkm}{N_G^<}
\newcommand{\Linki}[1]{N_G^{\hskip.5pt\raisebox{.75pt}{\scriptsize$\star$}}(#1)}
\numberwithin{equation}{section}
\begin{document}

\title{On Closed Graphs II}

\author{David A.\ Cox}
\address{Department of Mathematics and Statistics, Amherst
College, Amherst, MA 01002-5000, USA}
\email{dacox@amherst.edu}

\author{Andrew Erskine}
\address{Department of Mathematics and Statistics, Amherst
College, Amherst, MA 01002-5000, USA}
\email{aperskine@gmail.com}

\begin{abstract}
A graph is closed when its vertices have a labeling by $[n]$ with a
certain property first discovered in the study of binomial edge
ideals.  In this article, we explore various aspects of closed graphs,
including the number of closed labelings and clustering coefficients.
 \end{abstract}

\keywords{closed graph, clustering coefficient} 
\subjclass[2010]{05C75 (primary), 05C25, 05C78 (secondary)}

\maketitle

\section{Introduction}
\label{intro}

Given a simple graph $G$ with vertices $V(G)$ and edges $E(G)$, a
\emph{labeling} of $G$ is a bijection $V(G) \simeq [n] =
\{1,\dots,n\}$.  Given a labeling, we assume $V(G) = [n]$.

\begin{definition}
A labeling of $G$ is \emph{closed} when $\{j,i\}, \{i,k\} \in E(G)$
with $j > i < k$ or $j < i > k$ implies $\{j,k\} \in E(G)$.  Then $G$
is \emph{closed} if it has a closed labeling.
\end{definition}

A labeling of $G$ gives a direction to each edge $\{i,j\} \in E(G)$
where the arrow points from $i$ to $j$ when $i < j$, so the arrow
points to the bigger label.  In this context, closed means that
when two edges point away from a vertex or towards a vertex,
the remaining vertices are connected by an edge.

\begin{equation}
\label{closedpicture}
\begin{array}{ccc}
\begin{tikzpicture}
  \node[vertex] (n1)  at (2,1) {$i$};
  \node[vertex] (n2)  at (1,3)  {$j$};
  \node[vertex] (n3) at (3,3) {$k$};
 
  \foreach \from/\to in {n1/n2,n1/n3}
\draw[->] (\from)--(\to);;

 \foreach \from/\to in {n2/n3}
\draw[dotted] (\from)--(\to);;

\end{tikzpicture}&\hspace{30pt}&
\begin{tikzpicture}
  \node[vertex] (n1)  at (2,1) {$i$};
  \node[vertex] (n2)  at (1,3)  {$j$};
  \node[vertex] (n3) at (3,3) {$k$};
 
  \foreach \from/\to in {n2/n1,n3/n1}
\draw[->] (\from)--(\to);;

 \foreach \from/\to in {n2/n3}
\draw[dotted] (\from)--(\to);;

\end{tikzpicture}
\end{array}
\end{equation}

Closed graphs were first encountered in the study of binomial edge
ideals defined in \cite{H} and \cite{O}.  Properties of these ideals
are explored in \cite{E, S} and their relation to closed graphs
features in \cite{CR, E3,E4,E2}.

It is natural to ask for a characterization of those graphs that have
a closed labeling. One solution was given in \cite{CR}, which
characterizes closed graphs using the clique complex of $G$.  Another
approach, taken in our previous paper \cite{CE}, shows that a
connected graph is closed if and only if is chordal, claw-free, and
narrow (see \cite[Def.\ 1.3]{CE} for the definition of narrow).

In this paper, we will use tools developed in \cite{CE} to study the
combinatorial properties of closed graphs.  Our main results include:
\begin{itemize}
\item Section~\ref{collapsed}: Theorem~\ref{quotient}
  counts the number of closed labelings of a closed graph.
\item Section~\ref{counting}: Theorem~\ref{partition}
  counts the number of closed graphs with fixed layer structure (see
  Section~\ref{properties} for the definition of layer).
\item Section~\ref{clustering}: Theorem~\ref{CWSprop} gives a sharp
  lower bound for the clustering coefficient of a closed graph.
\end{itemize} 
To prepare for these results, we will recall some relevant results and
definitions in Section~\ref{properties} and explore when a labeling
remains closed after exchanging two labels in
Section~\ref{exchangeable}.

\section{Notation and Known Results}
\label{properties}

We recall some notation and results from \cite{CE}.  The
\emph{neighborhood} of $v \in V(G)$ is
\[
N_G(v) = \{ w \in V(G) \mid \{v,w\} \in E(G)\}.
\]
When $G$ is labeled and $i \in V(G) = [n]$, we have a disjoint union
\[
N_G(i) = N_G^>(i) \cup N_G^<(i),
\]
where
\[
N_G^>(i) = \{j \in N_G(i) \mid j > i\}\ \text{and}\ 
N_G^<(i) = \{j \in N_G(i) \mid j < i\}.
\]
Also, vertices $i,j \in [n]$ with $i \le j$ give the \emph{interval}
$[i,j] = \{k \in [n] \mid i \le k \le j\}$.

Here is a characterization of when a labeling of a connected graph is
closed.

\begin{proposition}[{[1, Prop.\ 2.4]}]
\label{nbdinterval}
A labeling on a connected graph $G$ is closed if and only if for all
$i \in [n]$, $N_G^>(i)$ is a complete subgraph and is an interval.
\end{proposition}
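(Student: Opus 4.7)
The plan is to prove both directions of the equivalence; the backward direction is relatively direct, while the forward direction — showing $N_G^>(i)$ is an interval — is where connectedness of $G$ enters crucially.

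For $(\Leftarrow)$, suppose every $N_G^>(i)$ is a complete subgraph and an interval. The closed condition has two sub-cases. If $\{j, i\}, \{i, k\} \in E(G)$ with $j > i < k$, then $j, k \in N_G^>(i)$, and completeness yields $\{j, k\} \in E(G)$ at once. If instead $j < i > k$, WLOG $j \le k < i$, then $i$ lies in $N_G^>(j)$. Writing $N_G^>(j) = [\alpha, \beta]$ with $\alpha \le i \le \beta$, I would argue $k \ge \alpha$, so that $k \in [\alpha, \beta] = N_G^>(j)$ and hence $\{j, k\} \in E(G)$. The inequality $k \ge \alpha$ would be obtained by iterating the interval and completeness hypotheses at intermediate vertices between $k$ and $i$.

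For $(\Rightarrow)$, assume the labeling is closed. Completeness of $N_G^>(i)$ is immediate: any $j, k \in N_G^>(i)$ satisfy $j > i < k$, so closedness forces $\{j, k\} \in E(G)$. For the interval property, I would argue by contradiction. Set $a = \min N_G^>(i)$ and $b = \max N_G^>(i)$, and suppose some $\ell$ with $a < \ell < b$ fails to lie in $N_G^>(i)$. Since $G$ is connected, fix a shortest path $P$ from $\ell$ to $i$; because $\{i, \ell\} \notin E(G)$, $P$ has length at least two. Walking along $P$ and applying closedness at each vertex of $P$ (each of whose neighborhoods above and below form cliques), I would aim to produce a chord of $P$, contradicting its minimality, or directly force $\{i, \ell\} \in E(G)$.

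The main obstacle is the interval property in $(\Rightarrow)$: the path-chasing requires a careful case analysis according to whether each successive vertex of $P$ is larger or smaller than the previous one, and one must check that each closedness-induced chord genuinely shortens $P$. A natural refinement is to induct on either the gap size $b - a$ or on the length of $P$, which would reduce the problem to the case of a single missing vertex between two adjacent neighbors of $i$ and streamline the combinatorics.
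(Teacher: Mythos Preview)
The paper does not give a proof of this proposition; it is quoted verbatim from \cite{CE} (their Proposition~2.4), so there is no argument here to compare against.

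That said, your proposal has a genuine gap in the $(\Leftarrow)$ direction, stemming from reading ``$N_G^>(i)$ is an interval'' as merely ``a contiguous block of integers.'' Under that reading the implication is false. Take $V(G)=[4]$ with edges $\{1,4\},\{2,3\},\{3,4\}$: the graph is connected, each $N_G^>(i)$ is a singleton or empty (hence complete and contiguous), yet $1<4>3$ with $\{1,4\},\{3,4\}\in E(G)$ and $\{1,3\}\notin E(G)$, so the labeling is not closed. In this example your desired inequality $k\ge\alpha$ reads $3\ge 4$, so no amount of ``iterating the interval and completeness hypotheses'' will produce it. The intended meaning --- visible from how the result is used later (e.g.\ to infer $\{i,i+1\}\in E(G)$ in the proof that collapsed graphs have two closed labelings) --- is that $N_G^>(i)=[i+1,m]$ for some $m$, or is empty. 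With that reading the case $j<k<i$ is immediate: $i\in N_G^>(j)=[j+1,m]$ forces $k\in[j+1,m]$, hence $\{j,k\}\in E(G)$, and no iteration is needed.

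For $(\Rightarrow)$, your path-chasing aims only at contiguity, not at the stronger form $[i+1,m]$, and the ``walk along $P$ and find a chord'' sketch is where the real content hides. The clean route is to first observe that closedness forces every shortest path to be monotone (a local extremum $v_{s-1}>v_s<v_{s+1}$ or $v_{s-1}<v_s>v_{s+1}$ gives the chord $\{v_{s-1},v_{s+1}\}$), so by connectedness $\{i,i+1\}\in E(G)$ for all $i<n$. Then if $j\in N_G^>(i)$ with $j>i+1$, applying closedness to $i<j>j-1$ (using $\{j-1,j\}\in E(G)$) gives $\{i,j-1\}\in E(G)$; descending inductively yields $N_G^>(i)\supseteq[i+1,j]$, which is exactly the required interval form.
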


When a connected graph $G$ has a labeling with $V(G) = [n]$, we can
decompose $G$ into layers as follows. The \emph{$N^{\mathit{th}}$
layer of $G$} is the set $L_N$ of all vertices that are distance $N$
from vertex $1$, i.e.,
\[
L_N = \{i \in [n] \mid i \text{ is distance } N \text{ from } 1\}.
\]
Since $G$ is connected, we have a disjoint union
\begin{equation}
\label{layerunion}
[n] = L_0 \cup L_1 \cup \cdots \cup L_h,
\end{equation}
where $h = \max\{N \mid L_N \ne \emptyset\}$.  Here is a simple
property of layers.

\begin{lemma}[{[1, Lem.\ 2.6]}]
\label{layerlem}
Let $G$ be labeled and connected. If $i \in L_N$ and $\{i,j\} \in
E(G)$, then $j \in L_{N-1}$, $L_N$, or $L_{N+1}$.
\end{lemma}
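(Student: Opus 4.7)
The plan is a short argument using the triangle inequality for the graph distance $d$ on $G$ (which is well defined since $G$ is connected). Writing $d$ for this distance, the statement $i\in L_N$ means $d(1,i)=N$, and $\{i,j\}\in E(G)$ gives $d(i,j)=1$.

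First I would apply the triangle inequality in one direction:
\[
d(1,j)\le d(1,i)+d(i,j)=N+1,
\]
which shows $j\in L_0\cup L_1\cup\cdots\cup L_{N+1}$. Next I would apply it in the other direction,
\[
d(1,i)\le d(1,j)+d(j,i)=d(1,j)+1,
\]
so $d(1,j)\ge N-1$. Combining these two bounds gives $N-1\le d(1,j)\le N+1$, so $j\in L_{N-1}\cup L_N\cup L_{N+1}$, as desired. (If $N=0$, then $L_{-1}$ is simply empty and the conclusion just says $j\in L_0\cup L_1$, which is consistent with $j$ being a neighbor of vertex $1$.)

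There is no real obstacle here: the lemma is a standard fact about BFS layers, and the only thing to verify is that the triangle inequality applies, which is immediate from concatenation of shortest paths in the connected graph $G$. The closedness hypothesis plays no role in the argument, and neither does Proposition~\ref{nbdinterval}; the lemma holds for any connected labeled graph.
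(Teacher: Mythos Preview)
Your argument is correct: this is precisely the standard triangle-inequality proof that adjacent vertices lie in adjacent BFS layers, and your remark that neither closedness nor Proposition~\ref{nbdinterval} is needed is accurate. Note, however, that the present paper does not actually prove this lemma---it is quoted from the companion paper \cite{CE} (as indicated by the attribution ``[1, Lem.\ 2.6]'') and stated here without proof---so there is no in-paper argument to compare against; your proof is exactly what one would expect such a proof to be.
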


When $G$ is closed and connected, the layers are especially nice.

\begin{proposition}[{[1, Prop.\ 2.7]}]
\label{layerprop}
If $G$ is connected with a closed labeling, then:
\begin{enumerate}
\item Each layer $L_N$ is complete.
\item If $d = \max\{L_N\}$, then $L_{N+1} = \Linkp(d)$.
\end{enumerate}
\end{proposition}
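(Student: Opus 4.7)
The plan is to prove (1) and (2) simultaneously by induction on $N$, with the key tool being Proposition~\ref{nbdinterval} applied to $\max L_N$ at each stage. The base case $N = 0$ is immediate: $L_0 = \{1\}$ is trivially complete, and since $1$ is the smallest label, $\Linkp(1) = N_G(1) = L_1$, which is complete by Proposition~\ref{nbdinterval}.

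For the inductive step, suppose (1) and (2) hold through index $N$, so in particular $L_N$ is complete and $L_N = \Linkp(d_{N-1})$ with $d_{N-1} = \max L_{N-1}$. Set $d = \max L_N$. I claim $L_{N+1} = \Linkp(d)$; this yields (2) for the next layer and, via Proposition~\ref{nbdinterval}, also (1). The inclusion $\Linkp(d) \subseteq L_{N+1}$ is the easy direction: any $v \in \Linkp(d)$ is adjacent to $d \in L_N$, hence lies in $L_{N-1} \cup L_N \cup L_{N+1}$ by Lemma~\ref{layerlem}; since $v > d > d_{N-1} \ge \max L_{N-1}$ (the middle inequality using $L_N = \Linkp(d_{N-1})$), only $v \in L_{N+1}$ is possible.

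The reverse inclusion is the heart of the argument. Take $v \in L_{N+1}$ with a neighbor $w \in L_N$. I will establish three properties in turn. \emph{First}, $v > w$; otherwise $v$ and $d_{N-1}$ would both be smaller neighbors of $w$ (using $w \in L_N = \Linkp(d_{N-1})$), so closedness would force $\{v, d_{N-1}\} \in E(G)$, placing $v$ at distance at most $N$ from $1$, a contradiction. \emph{Second}, $v$ is adjacent to $d$: for any $u \in L_N$ with $u > w$, completeness of $L_N$ gives $\{w, u\} \in E(G)$, and closedness applied with $w$ as the common smaller neighbor of $v$ and $u$ yields $\{v, u\} \in E(G)$; taking $u = d$ gives the claim. \emph{Third}, $v > d$; otherwise $v$ and $d_{N-1}$ would both be smaller neighbors of $d$ (using the second step and $d \in \Linkp(d_{N-1})$), again forcing $\{v, d_{N-1}\} \in E(G)$ by closedness, the same contradiction as before. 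Combining the second and third properties gives $v \in \Linkp(d)$.

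The principal obstacle is the interplay in the first and third steps: closedness alone is not enough, and one needs the inductive identification $L_N = \Linkp(d_{N-1})$ to ensure that $d_{N-1}$ sits adjacent to the ``apex'' vertex ($w$ or $d$, respectively) so that the closedness condition can be applied to the triple $\{v, \text{apex}, d_{N-1}\}$, thereby connecting $v$ back to $L_{N-1}$ and yielding the desired contradiction.
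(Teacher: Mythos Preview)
Your proof is correct. Note, however, that the paper does not actually prove this proposition: it is quoted from the authors' earlier paper \cite{CE} (indicated by the citation ``[1, Prop.\ 2.7]''), so there is no in-paper argument to compare against.

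A few minor remarks on your write-up. In the easy direction you write $v > d > d_{N-1} \ge \max L_{N-1}$; since $d_{N-1} = \max L_{N-1}$ by definition, the last inequality is an equality. In the second step, when you take $u = d$ you should separately note the trivial case $w = d$ (where $\{v,d\} = \{v,w\}$ is already an edge), since your argument as written assumes $u > w$ strictly. Finally, your phrase ``suppose (1) and (2) hold through index $N$'' is slightly imprecise: what you actually use is (1) for indices $0,\dots,N$ and (2) for indices $0,\dots,N-1$, and you then establish (2) at index $N$ together with (1) at index $N+1$. None of these affect the validity of the argument.
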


The diameter of $G$ is denoted $\Diam(G)$, and a \emph{longest
shortest path} of $G$ is a shortest path of length $\Diam(G)$.
These concepts relate to layers as follows.

\begin{proposition}[{[1, Prop.\ 2.8]}]
\label{diameters}
If $G$ is connected with a closed labeling, then:
\begin{enumerate}
\item $\Diam(G)$ is the integer $h$ appearing in \eqref{layerunion}.
\item If $P$ is a longest shortest path of $G$, then one endpoint of
  $P$ is in $L_0$ or $L_1$ and the other is in $L_h$, where
  $h=\Diam(G)$.
\end{enumerate}
\end{proposition}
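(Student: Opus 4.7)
The plan is to prove (1) by showing the two inequalities $\Diam(G) \geq h$ and $\Diam(G) \leq h$, and then to read off (2) from a careful version of the second inequality applied to the endpoints of a longest shortest path.

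For the easy direction $\Diam(G) \geq h$, I pick any vertex $v \in L_h$; by the definition of the layer $L_h$, the distance $d(1,v)$ equals $h$, so the diameter is at least $h$. The heart of the argument is the reverse inequality. Given any two vertices $u \in L_a$ and $v \in L_b$ with $a \leq b$, I will build a short path from $u$ to $v$ using the \emph{spine} vertices $d_N := \max L_N$. By Proposition~\ref{layerprop}(2), we have $L_{N+1} = \Linkp(d_N)$, so each vertex in $L_{N+1}$ is adjacent to $d_N$; in particular $d_{N+1}$ is adjacent to $d_N$, giving the path $d_a, d_{a+1}, \ldots, d_{b-1}, v$ of length $b-a$. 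By Proposition~\ref{layerprop}(1), the layer $L_a$ is complete, so $u$ is either equal or adjacent to $d_a$. This produces a $u$--$v$ path of length at most $b - a + 1$.

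To finish (1) I split into two cases. If $a = 0$, then $u = 1$ and $d(u,v) \leq b \leq h$. If $a \geq 1$, then $d(u,v) \leq b - a + 1 \leq h - 1 + 1 = h$. Hence $\Diam(G) \leq h$, combining with the other inequality to give $\Diam(G) = h$.

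For (2), let $P$ be a longest shortest path, so its endpoints $u \in L_a$ and $v \in L_b$ (WLOG $a \leq b$) satisfy $d(u,v) = h$. If $a = 0$, then $u = 1$ and $d(u,v) = b$, forcing $b = h$, so one endpoint lies in $L_0$ and the other in $L_h$. If $a \geq 1$, the bound above gives $h = d(u,v) \leq b - a + 1 \leq h - a + 1$, forcing $a \leq 1$; so $a = 1$, and then $h \leq b - a + 1 = b$ together with $b \leq h$ forces $b = h$. Either way, one endpoint lies in $L_0 \cup L_1$ and the other in $L_h$.

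The main obstacle is simply bookkeeping the bound $d(u,v) \leq b - a + 1$ carefully enough to squeeze out (2); no deeper idea is needed beyond the completeness of each layer and the spine property $L_{N+1} = \Linkp(d_N)$ supplied by Proposition~\ref{layerprop}.
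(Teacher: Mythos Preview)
The paper does not prove this proposition; it is quoted without proof from \cite{CE} (listed as [1, Prop.~2.8]) in the section on known results, so there is no in-paper argument to compare against. Your proof is correct: the spine path $d_a,d_{a+1},\dots,d_{b-1},v$ built from Proposition~\ref{layerprop}(2), together with the completeness of $L_a$ from Proposition~\ref{layerprop}(1), gives $d(u,v)\le b-a+1$ for $a\ge 1$ and $d(u,v)\le b$ for $a=0$, and both (1) and (2) follow directly from this bound exactly as you describe.
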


\section{Exchangeable Vertices} 
\label{exchangeable}

A closed graph with at least two vertices has at least two closed
labelings, since the reversal of a closed labeling is clearly closed.
But there may be other closed labelings, as shown by the simple
example:
\begin{equation}
\label{simpleex}
\begin{array}{ccc}
\begin{tikzpicture}
  \node[vertex] (n1)  at (2,1) {1};
  \node[vertex] (n2)  at (1,2.5)  {2};
  \node[vertex] (n3) at (3,2.5) {3};
 
  \node[vertex] (n4) at (2.5,4){4};

  \foreach \from/\to in {n1/n2,n1/n3,n2/n3, n3/n4}
\draw (\from)--(\to);;

\end{tikzpicture}&\hspace{20pt}&
\begin{tikzpicture}
  \node[vertex] (n1)  at (2,1) {2};
  \node[vertex] (n2)  at (1,2.5)  {1};
  \node[vertex] (n3) at (3,2.5) {3};
 
  \node[vertex] (n4) at (2.5,4){4};

  \foreach \from/\to in {n1/n2,n1/n3,n2/n3, n3/n4}
\draw (\from)--(\to);;

\end{tikzpicture}
\end{array}
\end{equation}
To explore what makes this example work, we need some definitions.

\begin{definition}
\label{nbdplus}
Let $G$ be a graph.
\begin{enumerate}
\item The \emph{full neighborhood} of a vertex $v \in V(G)$ is
$\Linki{v} = \{v\} \cup \Link(v)$.
\item $v,w \in V(G)$ are \emph{exchangeable}, written $v \sim w$, if
  $\Linki{v} = \Linki{w}$.
\end{enumerate}
\end{definition}

Vertices $1$ and $2$ are exchangeable in the left-hand graph of
\eqref{simpleex}.  Switching labels gives the right-hand graph, which is
still closed.  Here is the general result.

\begin{proposition}
\label{prop:labeling1}
Let $G$ have a closed labeling.  If $i,j \in [n]$, $i \ne j$, are
exchangeable, then the labeling that switches $i$ and $j$ is also
closed.
\end{proposition}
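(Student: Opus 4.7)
The plan is to invoke Proposition~\ref{nbdinterval}: writing $\tau=(i\,j)$ and letting $G'$ denote the graph $G$ with labels $i$ and $j$ swapped, the new labeling will be closed provided that for every $k\in[n]$, the set $N_{G'}^>(k)$ is an interval and induces a complete subgraph of $G'$. I would begin with a preliminary observation: exchangeability of $i$ and $j$ forces $\{i,j\}\in E(G)$ (because $j\in\{j\}\cup N_G(j)=\{i\}\cup N_G(i)$ and $j\ne i$), so $i$ and $j$ lie in the same component of $G$; since the swap leaves every other component untouched, I may assume $G$ is connected.

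The core computation is to establish $N_{G'}^>(k)=N_G^>(k)$ as subsets of $[n]$ for every $k$. For $k\notin\{i,j\}$, exchangeability yields $i\in N_G(k)\Leftrightarrow j\in N_G(k)$, so $\tau$ fixes $N_G(k)$ setwise. For $k=i$, the vertex carrying label $i$ in $G'$ is the one originally labeled $j$, so $N_{G'}(i)=\tau(N_G(j))$; using $N_G(j)=(N_G(i)\setminus\{j\})\cup\{i\}$ together with the fact that $\tau$ fixes $N_G(i)\setminus\{j\}$, this collapses to $N_G(i)$. The case $k=j$ is symmetric. Because the sets are unchanged, the interval property transfers from $G$ to $G'$ for free.

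The last task is to verify completeness of $N_{G'}^>(k)$ \emph{as a subgraph of $G'$}. For pairs of labels both outside $\{i,j\}$, the corresponding edge is identical in $G$ and $G'$, so the original completeness of $N_G^>(k)$ in $G$ suffices. An edge in $G'$ incident to $i$ or $j$ pulls back under $\tau$ to an edge in $G$ with the affected endpoint swapped; the exchangeability identity $b\in N_G(i)\Leftrightarrow b\in N_G(j)$ for $b\notin\{i,j\}$, combined with $\{i,j\}\in E(G)$, upgrades completeness in $G$ into completeness in $G'$. Applying Proposition~\ref{nbdinterval} concludes the proof. There is no single conceptually difficult step; the only real care needed is to split into the cases $k\notin\{i,j\}$, $k=i$, $k=j$ and, within the completeness verification, to track how $\tau$ acts on edges incident to $i$ or $j$.
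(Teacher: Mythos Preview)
Your proof is correct and follows a genuinely different route from the paper's. The paper works directly from the definition of a closed labeling: it fixes a triple $u,v,w$ with $\{u,v\},\{v,w\}\in E(G)$ and $\phi(u)>\phi(v)<\phi(w)$ (or the reverse), then does a case analysis on how $\{i,j\}$ meets $\{u,v,w\}$ to deduce $\{u,w\}\in E(G)$. Your argument instead passes through the characterization in Proposition~\ref{nbdinterval}: the key observation is the clean identity $N_{G'}(k)=N_G(k)$ for \emph{every} $k$, which makes the interval condition automatic and reduces the problem to checking that completeness of $N_G^>(k)$ in $G$ transfers to completeness in $G'$, a short verification using exchangeability on edges touching $i$ or $j$.

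The trade-off is that the paper's approach is entirely self-contained and handles disconnected $G$ without comment, whereas your route needs the reduction to the connected component containing $i$ and $j$ (so that Proposition~\ref{nbdinterval} applies) and leans on that earlier structural result. On the other hand, your neighborhood-invariance statement $N_{G'}(k)=N_G(k)$ is a conceptually tidy explanation of \emph{why} the swap preserves closedness, and it cuts down the amount of ad hoc casework compared to the paper's direct verification.
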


\begin{proof}
Define $\phi : [n] \to [n]$ by $\phi(i) = j$, $\phi(j) = i$, and
$\phi(k) = k$ for $k \in [n] \setminus \{i,j\}$.  Pick $u,v,w\in
V(G)$ with $\{u,v\},\{v,w\}\in E(G)$, $u\ne w$, and
$\phi(u)>\phi(v)<\phi(w)$ or $\phi(u)<\phi(v)>\phi(w)$.  We need to
prove that $\{u,w\}\in E(G)$.

If $\{i,j\}\cap\{u,v,w\} = \emptyset$, then $\{u,w\}\in E(G)$ since
the original labeling is closed.  Now suppose $\{i,j\}\cap\{u,v,w\}
\ne \emptyset$ and $\phi(u)>\phi(v)<\phi(w)$.  There are several cases
to consider.  First suppose that $i=v$.  
If $j \in \{u,w\}$, then
without loss of generality we may assume $j = u$.  Then
\[
w \in \Linki{v} = \Linki{i} = \Linki{j} = \Linki{u}
\]
implies $\{u,w\} \in E(G)$.  
If $j \notin \{u,w\}$, then
$\phi(u)>\phi(i)<\phi(w)$ means that $u > j < w$.  Then $\{u,w\} \in
E(G)$ since the original labeling is closed and $j \sim i = v$.

The proof when $j=v$ is similar and is omitted.  Then two cases remain:
\begin{itemize}
\item $i = u$ and $j \notin \{v,w\}$. 
Thus $\phi(u) > \phi(v) <
  \phi(w)$ means that $j>v<w$.  Then $\{j,w\} \in E(G)$ since the
  original labeling is closed and $j \sim i = u$.  Using $j \sim i =
  u$ again, we conclude that
  $\{u,w\}\in E(G)$.
\item $i = u$ and $j = w$.  Then $\phi(u)>\phi(v)<\phi(w)$ means
  $j>v<i$. Then $\{u,w\} = \{i,j\} \in E(G)$ since the original
  labeling is closed. 
\end{itemize}

The proof when $\phi(u)<\phi(v)>\phi(w)$ is similar and is omitted. 
\end{proof}

Exchangeability $v\sim w$ is an equivalence relation on
$V(G)$ with equivalence classes
\[
e(v) = \{w \in V(G) \mid w \sim v\} = \{w \in V(G) \mid \Linki{w} =
\Linki{v}\}.
\]  
Equivalence classes are complete, since $v \sim w$ implies $v \in
\Linki{v} = \Linki{w}$, so that $\{v,w\} \in E(G)$ whenever $v \ne w$.

Since permutations are generated by transpositions,
Proposition~\ref{prop:labeling1} implies that when $G$ has a closed
labeling, every permutation of an equivalence class yields a new
closed labeling.

When $G$ is connected and closed, equivalence classes have
the following structure.

\begin{proposition}
\label{interval}
If $G$ is connected with a closed labeling and $i \in [n]$, then the
equivalence class $e(i)$ is an interval.
\end{proposition}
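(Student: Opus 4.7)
The plan is to fix $j, k \in e(i)$ with $j < k$ and an arbitrary $\ell$ with $j < \ell < k$, and prove $\ell \sim j$, i.e., $\Linki{\ell} = \Linki{j}$.

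The first step is to establish that the edges $\{j,\ell\}$, $\{\ell,k\}$, and $\{j,k\}$ all lie in $E(G)$. The last is immediate since $j \sim k$. For the others, observe that in a connected closed graph the layers $L_M$ are consecutive intervals of $[n]$, a fact immediate from Proposition~\ref{layerprop}(2) (which gives $\max L_M < \min L_{M+1}$); moreover every $v < n$ satisfies $\{v, v+1\} \in E(G)$---within a layer this is by completeness, while across a layer boundary it follows from Proposition~\ref{layerprop}(2) applied at $v = \max L_M$. This forces $\Linkp(j)$, which is an interval by Proposition~\ref{nbdinterval}, to have the form $[j+1, b]$. Since $k \in \Linkp(j)$, we have $b \ge k$, so $\ell \in [j+1, k-1] \subseteq \Linkp(j)$, giving $\{j, \ell\} \in E(G)$. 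Then closedness applied at $j$ to $\{j, \ell\}$ and $\{j, k\}$ (both neighbors exceed $j$) yields $\{\ell, k\} \in E(G)$.

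With these edges in hand, I would deduce $\Linki{\ell} = \Linki{j}$ by showing $w \in \Link(\ell) \iff w \in \Link(j)$ for every $w \notin \{j, \ell\}$. The case $w = k$ is immediate from $\{j, k\}, \{\ell, k\} \in E(G)$. For $w \notin \{j, \ell, k\}$, the argument is a short case analysis using closedness: if $w \in \Link(\ell)$ with $w < \ell$, then closedness at $\ell$ applied to $\{w, \ell\}$ and $\{j, \ell\}$ gives $\{w, j\} \in E(G)$; if $w > \ell$, closedness at $\ell$ applied to $\{w, \ell\}$ and $\{k, \ell\}$ gives $\{w, k\} \in E(G)$, whence $\{w, j\} \in E(G)$ via the relation $\Link(j) \setminus \{k\} = \Link(k) \setminus \{j\}$ coming from $j \sim k$. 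The reverse direction is symmetric: $w \in \Link(j)$ with $w > j$ gives $\{w, \ell\} \in E(G)$ by closedness at $j$, while $w < j$ gives $w \in \Link(k)$ via $j \sim k$ and then $\{w, \ell\} \in E(G)$ by closedness at $k$.

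The main obstacle is the first step: assembling Propositions~\ref{nbdinterval} and~\ref{layerprop} to force $\ell$ to be adjacent to both $j$ and $k$. Once this adjacency is secured, the closedness axiom together with the symmetry provided by $j \sim k$ reduces the rest to the routine case analysis above.
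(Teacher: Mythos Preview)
Your argument is correct and follows essentially the same route as the paper's proof: first establish that the middle vertex is adjacent to both endpoints of the exchangeable pair, then run a case analysis (by position of the test vertex) using closedness together with $j\sim k$ to show the full neighborhoods coincide. The only noteworthy differences are cosmetic: you justify $\{j,\ell\}\in E(G)$ more explicitly via the consecutive-interval structure of the layers and the fact that $\{v,v+1\}\in E(G)$, whereas the paper simply invokes ``$\Linkp(i)$ is an interval'' (implicitly using the same fact); and in the case $w<j$ you pass through $\Link(k)$ and apply closedness at $k$, while the paper instead uses that $\Linkp(w)$ is an interval containing both $j$ and $k$ to conclude it contains $\ell$.
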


\begin{proof}
It suffices to show that if $i < j$ are exchangeable and $i < k < j$,
then $\Linki{k}= \Linki{i}$.  First note that $\{i,k\} \in E(G)$ since
$j \in \Linkp(i)$ and $\Linkp(i)$ is an interval by
Proposition~\ref{nbdinterval}.  Then $\{j,k\} \in E(G)$ since $i\sim
j$.

Now take $m \in \Linki{k}$.  We need to show $m \in \Linki{i}$.  If $m
= k$, this follows from the previous paragraph.  If $\{m,k\} \in
E(G)$, there are two possibilities:
\begin{itemize}
\item If $m < k$, then $m < k > i$, so $\{m,i\} \in
E(G)$ since the labeling is closed.
\item If $m > k$, then $m > k < j$, so either $m=j$ or $\{m,j\} \in
  E(G)$ by closed. 
\end{itemize}
Since $\Linki{i} = \Linki{j}$, both possibilities imply $m \in
\Linki{i}$.

Conversely, take $m \in \Linki{i}$.  If $m = i$, then $m \in \Linki{k}$
since $\{i,k\} \in E(G)$ by the first paragraph of the proof.  If
$\{m,i\} \in E(G)$, then $\{m,j\} \in E(G)$ since $i\sim j$.  Again,
there are two possibilities: 
\begin{itemize}
\item If $m < i$, then $m < i < k < j$, so $\{m,k\} \in
E(G)$ since $\Linkp(m)$ is an interval.
\item If $m > i$, then $m > i < k$, so either $m=k$ or $\{m,k\} \in
E(G)$ by closed.
\end{itemize}
Thus $m \in \Linki{k}$ and the proof is complete.
\end{proof}

\section{Counting Closed Labelings}
\label{collapsed}

Some graphs have no nontrivial exchangeable vertices.

\begin{definition}
\label{collapseddef}
A graph $G$ is \emph{collapsed} if all exchangeable vertices are
equal, i.e., $\Linki{v} = \Linki{w}$ implies $v = w$.
\end{definition}

\begin{proposition}
\label{collapsedthm}
Let $G$ be a closed graph with at least three vertices.  Then the following
are equivalent:
\begin{enumerate}
\item $G$ has exactly two closed labelings.  
\item $G$ is connected and collapsed.
\end{enumerate}
\end{proposition}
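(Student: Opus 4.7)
The plan is to prove the two implications separately, with most of the substance in $(2)\Rightarrow(1)$.

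For $(1)\Rightarrow(2)$, I argue by contrapositive. If $G$ is disconnected with components $G_1,\ldots,G_r$, every closed labeling corresponds to choosing a partition of $[n]$ into subsets of the correct sizes and then a closed labeling on each component, since the closed condition only constrains triples sharing edges and therefore cannot couple distinct components. For $n\ge 3$ the number of such partitions alone is at least $3$, yielding more than two closed labelings. If instead $G$ is connected but not collapsed, pick distinct exchangeable $v,w$; Proposition~\ref{prop:labeling1} supplies a label-swap $\ell_{\mathrm{sw}}$ that is closed and different from $\ell$. The only way $\ell_{\mathrm{sw}}$ can coincide with $\bar\ell$ is if $\ell(u)=(n+1)/2$ for every $u\notin\{v,w\}$, which forces $n\le 3$, and a short case check then shows that connectedness together with the exchangeability of $v,w$ forces $G=K_3$, which has $6>2$ closed labelings.

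For $(2)\Rightarrow(1)$, any closed labeling $\ell$ and its reversal $\bar\ell$ are two distinct closed labelings, so it suffices to show there are at most two. The strategy is to prove that every closed labeling $\ell'$ is determined by the single datum $v' := \ell'^{-1}(1)$, and that $v'$ can take only two values. For the first part, assume $v' = v := \ell^{-1}(1)$. Proposition~\ref{layerprop} and a short induction using part~(2) of that proposition show that the BFS layers $L_0,\ldots,L_h$ from $v$ are intrinsic to $G$ and that the labels of $L_N$ occupy a fixed consecutive interval of $[n]$; hence $\ell$ and $\ell'$ assign every vertex to the same layer and the same block of possible labels. Within a layer $L_N$, applying Proposition~\ref{nbdinterval} together with its $N^<$-analogue (obtained by applying Proposition~\ref{nbdinterval} to the reversed labeling) gives, for $u_1,u_2\in L_N$ with $\ell(u_1)<\ell(u_2)$, the inclusions $N(u_2)\cap L_{N-1}\subseteq N(u_1)\cap L_{N-1}$ and $N(u_1)\cap L_{N+1}\subseteq N(u_2)\cap L_{N+1}$. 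Collapsedness rules out simultaneous equality in both inclusions for any distinct pair in $L_N$, so the monotonicity is strict in at least one coordinate and the intra-layer order is graph-determined; this forces $\ell'=\ell$.

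The main obstacle is the second part: showing $v'\in\{v,\ell^{-1}(n)\}$. A necessary condition, by Propositions~\ref{nbdinterval} and~\ref{layerprop}(1), is that every BFS layer from $v'$ be a clique, and in particular that $v'$ be simplicial. Being simplicial is not sufficient: for instance, in the graph on $\{1,\ldots,5\}$ with edges $\{12,23,24,34,45\}$, vertex~$3$ is simplicial, but its second BFS layer $\{1,5\}$ is not a clique, so vertex~$3$ cannot be labeled~$1$. I would approach the sufficient condition by overlaying the BFS structure from a hypothetical $v'$ on top of the layering from $v$ and exploiting the rigidity forced by collapsedness to conclude that the two BFS decompositions are compatible only when $v'\in\{v,\ell^{-1}(n)\}$. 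This comparison of two BFS structures on the same graph is the delicate technical step where I expect most of the difficulty to concentrate: intuitively, collapsedness rigidifies the ``proper interval''-like structure of $G$ enough that the two endpoints $\ell^{-1}(1)$ and $\ell^{-1}(n)$ are graph-theoretically identifiable.
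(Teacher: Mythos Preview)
Your $(1)\Rightarrow(2)$ is fine and close to the paper's argument, though you organise the ``not collapsed'' case differently (the paper simply counts: an equivalence class of size $\ge 3$ gives $\ge 6$ labelings, and size~$2$ plus a third vertex gives $\ge 4$).

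For $(2)\Rightarrow(1)$, your first half---once $\ell'^{-1}(1)=\ell^{-1}(1)$, the labeling is forced---is correct in spirit, though the paper does not argue via layer-monotonicity. Instead it runs a single induction on $u$: assuming $\phi(j)=j$ for $j<u$, it takes $u'=\phi^{-1}(u)$, shows $\{u,u'\}\in E(G)$ (both lie in $\Linkp(u-1)$, which is complete), and then proves $\Linki{u}=\Linki{u'}$ by splitting on whether a neighbour $m$ satisfies $m<u$, $m=u$, or $m>u$ and invoking closedness of whichever labeling is convenient. Collapsedness then gives $u'=u$. This is more direct than your two-step ``layers first, then intra-layer order'' decomposition.

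The genuine gap is your second half, which you yourself flag. You want to show $v'\in\{\ell^{-1}(1),\ell^{-1}(n)\}$ by ``overlaying BFS structures'', but this is not an argument. The paper's key move here is concrete and uses a result already on the table: Proposition~\ref{diameters}(2). Since $1'=\phi^{-1}(1)$ is an endpoint of a longest shortest path (in the $\phi$-labeling), that proposition applied to the \emph{original} labeling forces $1'\in L_0\cup L_1$ or $1'\in L_h$. Replacing $\phi$ by its reversal if necessary, assume $1'\in L_0\cup L_1$. Note this is weaker than your target $1'=1$; the paper closes the gap by showing $\Linki{1}=\Linki{1'}$: both sets are complete (each equals $\{1\}\cup L_1$ for the respective labeling), and since $1'\in L_0\cup L_1=\Linki{1}$, completeness gives $\Linki{1'}\subseteq\Linki{1}$; symmetry in the two labelings gives the reverse inclusion. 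Collapsedness then yields $1'=1$. This is exactly the ``rigidity'' you were reaching for, but it comes from the longest-shortest-path endpoint constraint, not from an abstract comparison of BFS trees.
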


\begin{proof}
The proof of (1) $\Rightarrow$ (2) is easy.  If $G$ is not connected,
then $G$ is a disjoint union $G = G_1 \cup G_2$, where $G_i$ is
closed.  We may assume $G_1$ has at least two vertices, so $G_1$ has
at least two labelings.  Then we get at least four closed labelings of
$G$:\ two where $1$ is in $G_1$, and two where $1$ is in $G_2$.  Also,
if $G$ is not collapsed, then some equivalence class $e(i)$ has at
least two elements.  If $|e(i)| \ge 3$, then switching labels within
$e(i)$ gives at least $6$ closed labeling, and if $|e(i)| = 2$, then
$G$ has at least one more vertex, which makes it easy to see $G$ has
at least four closed labelings.

The proof of (2) $\Rightarrow$ (1) will take more work.  First note
that $G$ has diameter $\Diam(G) = h \ge 2$.  This follows because $h =
1$ would imply that $G$ is complete, which is impossible since $G$ is
collapsed with $\ge 3$ vertices, and $h = 0$ is impossible since $G$
is connected with $\ge 3$ vertices.

Fix a closed labeling with $V(G) = [n]$.  This gives layers $L_0 =
\{1\}, L_1, \dots, L_h$ associated with the labeling, and
Proposition~\ref{diameters}(2) implies that every longest shortest path
has one endpoint in $L_0$ or $L_1$ and the other in $L_h$.

Let $\phi:[n] \to [n]$ be another closed labeling which we will call
the $\phi$-labeling.  Pick $1' \in [n]$ such that $\phi(1') = 1$.  Then
some longest shortest path of $G$ begins at $1'$.  By the previous
paragraph, $1' \in L_0 \cup L_1$ or $1' \in L_h$.  Replacing $\phi$
with its reversal if necessary, we may assume that $1' \in L_0 \cup
L_1$.  We claim that $\phi$ is the identity function.  This will prove
the theorem.

We first show that $1' = 1$, i.e, $\phi(1) = 1$.  Recall that $L_1=
\Link(1)$ and that $L_1$ is complete by Propostion~\ref{layerprop}(1).
It follows that $\Linki{1} = L_0 \cup L_1$ is also complete.  The same
argument implies that $\Linki{1'}$ is complete.  Now suppose $1 \ne
1'$ and pick $m \in \Linki{1'}$ is different from $1$.  Then $\{1,m\}
\in E(G)$ since $1 \in \Linki{1'}$ and $\Linki{1'}$ is complete.  This
implies $m \in L_1 = \Link(1)$, and then the inclusion $\Linki{1'}
\subseteq \Linki{1}$ follows easily.  The opposite inclusion follows
by interchanging the two labelings.  Hence we have proved $\Linki{1'}
= \Linki{1}$.  Since we are assuming $1 \ne 1'$, this contradicts
the fact that $G$ is collapsed.  Hence we must have $1' = 1$, as claimed.

Now suppose that vertices $1,\dots,u-1\in [n]$ have the same
$\phi$-label as in the original labeling, i.e., $\phi(j) = j$ for $1
\le j \le u-1$.  Then pick $u' \in [n]$ such that $\phi(u') = u$.  To
prove that $u' = u$, i.e., $\phi(u) = u$, suppose that $u' \ne u$.
Since $\phi$ is the identity on $1,\dots,u-1$ and $\phi(u') = u$, we
have $u' > u$ and $\phi(u') < \phi(u)$.

We first show that $\{u,u'\} \in E(G)$.  Since $G$ is
connected, Proposition~\ref{nbdinterval} implies that every vertex is
connected by an edge to its successor in any closed labeling.  For the
original labeling, this gives $\{u-1,u\} \in E(G)$, and for the
$\phi$-labeling, this gives $\{u-1,u'\} \in E(G)$ since $\phi(u-1) =
u-1$ and $\phi(u') = u$.  Proposition~\ref{nbdinterval} implies that
$\Linkp(u-1)$ (in the original labeling) is complete, and $\{u,u'\}
\in E(G)$ follows.

We next prove that $\Linki{u}\subseteq\Linki{u'}$.  Pick $m\in
\Linki{u}$.  Then:
\begin{itemize}
\item If $m = u$, then $m \in \Linki{u'}$ since $\{u,u'\} \in E(G)$.  
\item If $m > u$, then either $m = u'$, in which case $m \in
  \Linki{u'}$ is obvious, or $m \ne u'$, in which case $m \in
  \Linki{u'}$ since $m > u < u'$ implies $\{m,u'\} \in E(G)$ as the 
  original labeling is closed.
\item If $m < u$, then $m \in \Linki{u'}$ since $\phi(m) = m < u <
  \phi(u) > \phi(u')$ implies $\{m,u'\} \in E(G)$ as the
  $\phi$-labeling is closed.
\end{itemize}
This proves $\Linki{u} \subseteq \Linki{u'}$.  By symmetry, we get
$\Linki{u'}=\Linki{u}$, which contradicts $u' \ne u$ since $G$ is
collapsed. We conclude that $u'=u$, and then $\phi$ is the identity by
induction on $u$.  This completes the proof.
\end{proof}

Now suppose that $G$ is a connected graph with a closed labeling.
Since each equivalence class is an interval by
Proposition~\ref{interval}, we can order the equivalence classes
\begin{equation}
\label{orderE}
E_1 < E_2 < \cdots < E_r
\end{equation}
so that if $i \in E_a$ and $j \in E_b$, then $i < j$ if and only if $a
< b$.  This induces an ordering on $V(G)/\!\!\sim \ =
\{E_1,\dots,E_r\}$.  Then define the graph $G/\!\!\sim$ with vertices
\begin{equation}
\label{Gcvert}
V(G/\!\!\sim) = V(G)/\!\!\sim\ = \{E_1,\dots,E_r\}
\end{equation}
and edges
\begin{equation}
\label{Gcedge}
E(G/\!\!\sim) = \big\{\{E_a,E_b\} \mid \{i,j\} \in E(G) \text{ for
  some } i\in E_a, j \in E_b\big\}.
\end{equation}
Since $i\sim i'$ and $j \sim j'$ imply that $\{i,j\} \in E(G)$ if and
only if $\{i',j'\} \in E(G)$, we can replace ``for some'' with ``for
all'' in \eqref{Gcedge}.

\begin{theorem}
\label{quotient}
Let $G$ be connected with a closed labeling and exchangeable
equivalence classes $E_1,\dots,E_r$.  Then: 
\begin{enumerate}
\item The quotient graph $G/\!\!\sim$ defined in \eqref{Gcvert} and
  \eqref{Gcedge} is connected, collapsed, and closed with respect to
  the labeling \eqref{orderE}.
\item If $r > 1$, then $G$ has precisely $2\prod_{a=1}^r |E_a|!$
  closed labelings.
\end{enumerate}
\end{theorem}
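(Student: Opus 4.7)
For part (1), I would check the three properties in turn. Connectedness follows by projecting paths in $G$ through \eqref{Gcedge} to walks in $G/\!\!\sim$. For closedness of \eqref{orderE}: if $\{E_a,E_b\},\{E_b,E_c\}\in E(G/\!\!\sim)$ with $a>b<c$ and $a\ne c$, pick representatives $i\in E_a$, $j\in E_b$, $k\in E_c$; each class being an interval (Proposition~\ref{interval}) gives $i>j<k$ in the original labeling, so $\{i,k\}\in E(G)$ by closedness of $G$, hence $\{E_a,E_c\}\in E(G/\!\!\sim)$. The opposite case is symmetric. For collapsedness, I would show that $\Linki{E_a}=\Linki{E_b}$ in $G/\!\!\sim$ forces $\Linki{i}=\Linki{j}$ in $G$ for any $i\in E_a$, $j\in E_b$: given $m\in \Linki{i}$, its equivalence class $E_c$ lies in $\Linki{E_a}=\Linki{E_b}$, and whether $E_c=E_b$ (using completeness of classes) or $\{E_c,E_b\}\in E(G/\!\!\sim)$ (using \eqref{Gcedge}), one concludes $m\in\Linki{j}$. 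By symmetry $i\sim j$, so $E_a=E_b$.

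For part (2), I would first determine the closed labelings of $G/\!\!\sim$. When $r\ge 3$, part (1) together with Proposition~\ref{collapsedthm} gives exactly two, namely \eqref{orderE} and its reversal; when $r=2$, $G/\!\!\sim$ is a single edge and the same conclusion is immediate. Next I would define the map $\phi\mapsto\bar\phi$ from closed labelings of $G$ to closed labelings of $G/\!\!\sim$ by ordering the classes according to $\phi$-values: this is well-defined because Proposition~\ref{interval} applied to $\phi$ shows each $E_a$ is a $\phi$-interval, and closedness of $\bar\phi$ follows from the representative argument used in part (1).

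Finally, I would count the fibers of $\phi\mapsto\bar\phi$. A closed labeling inducing \eqref{orderE} must send each $E_a$ bijectively onto the consecutive block $\{s_{a-1}+1,\ldots,s_a\}$ with $s_a=|E_1|+\cdots+|E_a|$, so it differs from the original closed labeling only by permutations within each class; conversely, since symmetric groups are generated by transpositions, Proposition~\ref{prop:labeling1} guarantees every such permutation is closed. Hence this fiber has $\prod_a|E_a|!$ elements, and the fiber over the reversal has the same size by the parallel argument applied to the reversed labeling. Since these are the only two closed labelings of $G/\!\!\sim$ and every closed labeling of $G$ lies in exactly one fiber, the total is $2\prod_{a=1}^r|E_a|!$. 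I expect the collapsedness verification in part (1) to be the main obstacle; once the quotient's structural properties are established, the count in (2) is essentially forced by Propositions~\ref{collapsedthm} and \ref{prop:labeling1}.
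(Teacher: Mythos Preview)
Your proposal is correct and follows essentially the same route as the paper: establish that $G/\!\!\sim$ is connected, closed, and collapsed via the correspondence $u\in\Linki{v}\iff e(u)\in\Linki{e(v)}$, then use Proposition~\ref{collapsedthm} to pin down the closed labelings of the quotient and Proposition~\ref{prop:labeling1} to count the labelings in each fiber.

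One small difference is worth noting. You treat $r=2$ as a separate case, observing that $G/\!\!\sim$ is then a single edge with exactly two labelings. The paper instead proves that $r=2$ cannot occur: if there were only two classes $E_1,E_2$, connectedness forces an edge between them, and then the ``for all'' version of \eqref{Gcedge} makes $G$ complete, whence $r=1$. This is why the paper can invoke Proposition~\ref{collapsedthm} (which needs $\ge 3$ vertices) without a side case. Your workaround is harmless since the $r=2$ case is vacuous, but the paper's observation is a cleaner structural fact and worth recording.
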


\begin{proof}
For (1), we omit the straightforward proof that $G/\!\!\sim$ is
connected and closed with respect to \eqref{orderE}.  To prove that
$G/\!\!\sim$ is collapsed, we first observe that for vertices $u,v \in
V(G)$,
\begin{equation}
\label{eqclassiff}
u \in \Linki{v} \iff e(u) \in \Linki{e(v)}.
\end{equation}
We leave the simple proof to the reader.  Now suppose that equivalance
classes $e(v), e(w)$ satisfy $e(v) \sim e(w)$.  Then by
\eqref{eqclassiff}, we have
\[
u \in \Linki{v} \Leftrightarrow e(u) \in \Linki{e(v)} \Leftrightarrow
e(u) \in \Linki{e(w)} \Leftrightarrow u \in \Linki{w}. 
\]
This proves that $\Linki{v} = \Linki{w}$.  Then $v \sim w$, which
implies $e(v) = e(w)$.  It follows that $G/\!\!\sim$ is collapsed. 

For (2), first note that $r > 1$ implies $r \ge 3$, for if there were
only two equivalance classes $E_1$ and $E_2$, then since $G$ is
connected there must be $\{v,w\} \in E(G)$ with $v \in E_1$ and $w \in
E_2$.  The observation following \eqref{Gcedge} implies that $\{s,t\}
\in E(G)$ for all $s \in E_1$ and $t \in E_2$.  It follows easily that
$G$ is complete, which implies $r = 1$, a contradiction.  Hence $r \ge
3$.
 
According to Proposition~\ref{collapsedthm}, $G/\!\!\sim$ has exactly
two closed labelings since it has $r \ge 3$ vertices by the previous
paragraph and is connected, closed, and collapsed by (1).  It follows
from \eqref{orderE} that any closed labeling of $G$ induces one of
these two closed labelings of $G/\!\!\sim$.  Hence all closed
labelings of $G$ arise from the two ways of ordering the equivalance
classes, together with how we order elements within each equivalance
class.  Proposition~\ref{prop:labeling1} and the remarks following the
proposition imply that we can use any of the $|E|!$ orderings of the
elements of an equivalance class $E$.  Since different equivalence
classes can be ordered independently of each other, we get the desired
formula for the total number of closed orderings of $G$.
\end{proof}

\section{Counting Closed Graphs} 
\label{counting}

In Theorem~\ref{quotient}, we fixed a connected graph and counted
the number of closed labelings.  Here we change the point of view,
where we fix a labeling and count the the number of connected graphs
for which the given labeling is closed.

Here is how a layer of a connected closed graph
connects to the next layer.

\begin{definition}
\label{countingdefs}
Let $G$ be a connected graph with a closed labeling.  Let the layers
of $G$ be $L_0 = \{1\}, L_1, \dots, L_h$, $h = \Diam(G)$.
\begin{enumerate}
\item Let $a_N = |L_N|$ for $N = 0,\dots,h$.  Note that $a_0 = 1$.
\item If $N < h$, write the vertices of $L_N$ in order.  For $1 \le s
  \le a_N$, let $b_s$ be the number of edges of $G$ connecting the
  $s^{\mathit{th}}$ vertex of $L_N$ to a vertex of $L_{N+1}$.
\item The \emph{sequence} of $L_N$ is the sequence $S_N=(b_1, b_2,
  \dots, b_{a_N})$.
\end{enumerate}
\end{definition}

Here is some further notation we will need.  First, let $m_N =
\min\{L_N\}$ be the minimal element of the $L_N$.
Propositions~\ref{nbdinterval} and~\ref{layerprop} imply that $L_N$ is
complete and is an interval.  Thus $L_N = [m_N, m_N+a_N-1]$, and the
$s^{\mathit{th}}$ vertex of $L_N$ is $u_s = m_N + s-1$.

We can now show that the sequence $S_N = (b_1,b_2,\dots, b_{a_N})$
determines precisely how $L_N$ is connected to $L_{N+1}$.

\begin{proposition}
\label{prop:uniqueSequences}
Let $G$ be connected with a closed labeling.
If $u_s = m_N + s-1 \in L_N$ is the $s^{\mathit{th}}$ vertex of $L_N$
and $b_s > 0$, then
\[
\{v \in L_{N+1} \mid \{u_s,v\} \in E(G)\} = [m_{N+1},m_{N+1}+b_s-1].
\]
Thus $b_s$ determines how $u_s$ links to $L_{N+1}$.
\end{proposition}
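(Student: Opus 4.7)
The plan is to show that $N_G^>(u_s)$ is an interval of $[n]$ whose intersection with $L_{N+1}$ is exactly $[m_{N+1}, m_{N+1}+b_s-1]$. This reduces Proposition~\ref{prop:uniqueSequences} to combining three ingredients already in the paper: $N_G^>(u_s)$ is an interval (Proposition~\ref{nbdinterval}), the layers contain all the neighbors of $u_s$ with labels $>u_s$ (Lemma~\ref{layerlem} and Proposition~\ref{layerprop}), and consecutive layers are adjacent intervals in $[n]$.

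First I would verify $m_{N+1} = \max L_N + 1$. By the discussion just before the proposition, each $L_N$ is the interval $[m_N,m_N+a_N-1]$. By Proposition~\ref{layerprop}(2), $L_{N+1} = N_G^>(\max L_N)$, so every label in $L_{N+1}$ exceeds $\max L_N$; hence the layers are in increasing order of labels, and since they partition $[n]$ into intervals, they are consecutive intervals. In particular $m_{N+1}=\max L_N+1 > u_s$, so every vertex of $L_{N+1}$ has label greater than $u_s$, and the set we want to compute equals $N_G^>(u_s)\cap L_{N+1}$.

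Next I would locate $N_G^>(u_s)$ inside the ambient interval $L_N\cup L_{N+1}$. By Lemma~\ref{layerlem}, every neighbor of $u_s\in L_N$ lies in $L_{N-1}\cup L_N\cup L_{N+1}$; the ones with label $>u_s$ lie in $L_N\cup L_{N+1}$, which by the previous paragraph is the interval $[m_N,\max L_{N+1}]$. Thus $N_G^>(u_s)$ is an interval contained in $[u_s+1,\max L_{N+1}]$. Two cases finish the proof. If $s<a_N$, then completeness of $L_N$ (Proposition~\ref{layerprop}(1)) gives $u_{s+1},\dots,u_{a_N}\in N_G^>(u_s)$, so the interval $N_G^>(u_s)$ begins at $u_s+1$ and has the form $[u_s+1,q]$ with $q\ge \max L_N$. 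Its intersection with $L_{N+1}=[m_{N+1},\max L_{N+1}]$ is then $[m_{N+1},q]$, and since this intersection has $b_s$ elements, $q=m_{N+1}+b_s-1$. If $s=a_N$, then $u_s=\max L_N$ and Proposition~\ref{layerprop}(2) yields $L_{N+1}=N_G^>(u_s)$ directly, so $b_s=a_{N+1}$ and the intersection is all of $L_{N+1}=[m_{N+1},m_{N+1}+b_s-1]$.

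The only genuinely delicate point is the consecutivity of the layers: without knowing $m_{N+1}=\max L_N+1$, the interval $N_G^>(u_s)$ could in principle jump past some gap and enter $L_{N+1}$ strictly above $m_{N+1}$, which would destroy the claim. Once that gap is ruled out, the rest is a bookkeeping exercise that stitches together Propositions~\ref{nbdinterval} and~\ref{layerprop} with Lemma~\ref{layerlem}.
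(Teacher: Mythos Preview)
Your argument is correct and reaches the same conclusion, but the tactic for showing that the set $A=\{v\in L_{N+1}\mid \{u_s,v\}\in E(G)\}$ starts at $m_{N+1}$ differs from the paper's. The paper writes $A=N_G^{>}(u_s)\cap L_{N+1}$, notes that both factors are intervals, and then shows $m_{N+1}\in A$ directly from the closed condition: for any $v\in A$ with $v\neq m_{N+1}$ one has $m_{N+1}<v>u_s$ with $\{m_{N+1},v\}\in E(G)$ (completeness of $L_{N+1}$), hence $\{u_s,m_{N+1}\}\in E(G)$. You instead first establish that the layers are \emph{consecutive} intervals of $[n]$ (so $m_{N+1}=\max L_N+1$), then use completeness of $L_N$ to see that the interval $N_G^{>}(u_s)$ begins at $u_s+1$ and already fills out $[u_s+1,\max L_N]$; since $b_s>0$ forces this interval to reach into $L_{N+1}$, it must pick up $m_{N+1}$ first. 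The paper's route is marginally shorter and avoids the consecutivity lemma, invoking the closed property one more time instead; your route trades that for an explicit structural fact about layers which is worth knowing in its own right (and is used tacitly later in the paper, e.g.\ in the formula~\eqref{LNdesc}). One small wording point: when you write ``its intersection with $L_{N+1}$ is then $[m_{N+1},q]$,'' you are implicitly using $b_s>0$ to get $q\ge m_{N+1}$; it would be cleaner to state that before describing the intersection.
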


\begin{proof}
Let $A = \{v \in L_{N+1} \mid \{u_s,v\} \in E(G)\}$.  Note that every
$v \in A$ satisfies $v > u_s$ by Proposition~\ref{layerprop}(2).  It
follows easily that
\[
A = \Linkp(u_s) \cap L_{N+1}.
\]
We know that $L_{N+1}$ is an interval, and the same is true for
$\Linkp(u_s)$ by Proposition~\ref{nbdinterval}.  Hence $A$ is an
interval.  However, if $v \in A$ and $v \ne m_{N+1}$, then $m_{N+1} <
v > u_s$ and closed imply $\{u_s,m_{N+1}\} \in E(G)$ since
$\{m_{N+1},v\} \in E(G)$ by the completeness of $L_{N+1}$.  Hence
$m_{N+1} \in A$, and from here, the proposition follows without
difficulty.
\end{proof}

Here is an important property of the sequence $S_N$.

\begin{proposition}
\label{prop:increaseSequences}
Let $G$ be connected with a closed labeling.  If $N <
\Diam(G)$, then the sequence $S_N = (b_1,b_2,\dots, b_{a_N})$ of the
layer $L_N$ has the following properties:
\begin{enumerate}
\item The last element of $S_N$ is $a_{N+1}$, i.e., $b_{a_N} =
  a_{N+1}$.
\item $S_N$ is increasing, i.e., $b_s \le b_{s+1}$ for $s =
  1,\dots,a_N-1$.
\end{enumerate}
\end{proposition}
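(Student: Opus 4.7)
For (1), I observe that $u_{a_N} = m_N + a_N - 1 = \max L_N$ is precisely the vertex $d$ appearing in Proposition~\ref{layerprop}(2), so $L_{N+1} = \Linkp(u_{a_N})$. Every element of $L_{N+1}$ is therefore a neighbor of $u_{a_N}$ lying in $L_{N+1}$, giving $b_{a_N} = |L_{N+1}| = a_{N+1}$.

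For (2), I would argue by contradiction: suppose $b_s > b_{s+1}$ for some $1 \le s \le a_N - 1$, and set $v = m_{N+1} + b_{s+1}$. Since $b_{s+1} < b_s \le a_{N+1}$, the vertex $v$ lies in $L_{N+1} = [m_{N+1}, m_{N+1} + a_{N+1}-1]$. Because $b_s \ge 1$, Proposition~\ref{prop:uniqueSequences} describes the $L_{N+1}$-neighbors of $u_s$ as the interval $[m_{N+1}, m_{N+1} + b_s - 1]$, which contains $v$; so $\{u_s, v\} \in E(G)$. The same proposition applied to $u_{s+1}$ (or simply the observation $b_{s+1}=0$ in the boundary case) shows that $v$ is \emph{not} adjacent to $u_{s+1}$.

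To close the contradiction I would invoke the interval property of $\Linkp$. Layer $L_N$ is complete by Proposition~\ref{layerprop}(1), so $\{u_s,u_{s+1}\} \in E(G)$. Moreover, Proposition~\ref{layerprop}(2) (applied with $d = \max L_N$) forces $\min L_{N+1} > \max L_N$, so both $v$ and $u_{s+1}$ lie in $\Linkp(u_s)$. By Proposition~\ref{nbdinterval}, $\Linkp(u_s)$ is complete, hence $\{u_{s+1}, v\} \in E(G)$. Since $u_{s+1} < v$, this places $v$ in $\Linkp(u_{s+1}) \cap L_{N+1}$, which by Proposition~\ref{prop:uniqueSequences} equals $[m_{N+1}, m_{N+1} + b_{s+1} - 1]$. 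But $v = m_{N+1} + b_{s+1}$ lies just outside this interval, a contradiction. The only delicate step is the bookkeeping around the auxiliary vertex $v$---confirming $v \in L_{N+1}$ and handling the boundary case $b_{s+1}=0$ where Proposition~\ref{prop:uniqueSequences} does not directly apply to $u_{s+1}$; once this is in place, the argument is a routine application of completeness of $\Linkp$.
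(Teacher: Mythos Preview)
Your proof is correct and follows essentially the same approach as the paper: both arguments use the completeness of $\Linkp(u_s)$ (equivalently, the closed condition at $u_s$) to transfer an $L_{N+1}$-adjacency from $u_s$ to $u_{s+1}$, and then read off the inequality via Proposition~\ref{prop:uniqueSequences}. The only cosmetic differences are that the paper argues directly (choosing the top neighbor $m_{N+1}+b_s-1$ of $u_s$ and showing it is also a neighbor of $u_{s+1}$, handling the trivial case $b_s=0$ up front) whereas you argue by contradiction with the auxiliary vertex $v=m_{N+1}+b_{s+1}$ and must instead treat the boundary case $b_{s+1}=0$.
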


\begin{proof}
For (1), note that the last vertex of $L_N$ connects to every vertex
of $L_{N+1}$ by Proposition~\ref{layerprop}(2).  It follows that
$b_{a_N} = |L_{N+1}| = a_{N+1}$.

For (2), let $u_s$ be the $s^{\mathit{th}}$ vertex of $L_N$, $1 \le s
\le a_N-1$.  If $b_s = 0$, then $b_s \le b_{s+1}$ clearly holds.  If
$b_s > 0$, then $u_s$ connects to $m_{N+1}+b_s-1$ by
Proposition~\ref{prop:uniqueSequences}, and it connects to $u_{s+1}$
since $L_N$ is complete.  Then $m_{N+1}+b_s-1 > u_s < u_{s+1}$ implies
that $u_{s+1}$ connects to $m_{N+1}+b_s-1$ since the labeling is
closed.  Using Proposition~\ref{prop:uniqueSequences} again, we obtain
\[
m_{N+1}+b_s-1 \in [m_{N+1},m_{N+1}+b_{s+1}-1],
\]
and $b_s \le b_{s+1}$ follows.
\end{proof}

We now come to the main result of this section.

\begin{theorem}
\label{partition}
Fix $n$ and an integer partition $n = a_0 + a_1 + \dots + a_h$ with
$a_0 = 1$ and $a_N \ge 1$ for $N = 1,\dots,h$.  Also set
$\mathcal{L}_0 = \{1\}$ and 
\begin{equation}
\label{LNdesc}
\mathcal{L}_N = [a_0+\cdots+a_{N-1}+1,a_0+\cdots+a_N]
\end{equation}
for $N = 1,\dots,h$, so that $|\mathcal{L}_N| = a_N$.  Then the number
of graphs $G$ satisfying the conditions:
\begin{enumerate}
\item $V(G) = [n]$,
\item $G$ is connected and closed with respect to the labeling $V(G) =
  [n]$, and
\item The $N^{\mathit{th}}$ layer of $G$ is $\mathcal{L}_N$ for $N =
  0,\dots,h$,
\end{enumerate}
is given by the product
\[
\prod\limits_{N=0}^{h-1} \binom{a_{N+1}+a_N-1}{a_N-1}.
\]
\end{theorem}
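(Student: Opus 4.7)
The plan is to parametrise the graphs satisfying \textup{(1)}--\textup{(3)} by the tuples of sequences $(S_0,\dots,S_{h-1})$ attached to their layers, characterise exactly which tuples occur, and count them.

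First, for any $G$ satisfying \textup{(1)}--\textup{(3)}, each layer $\mathcal{L}_N$ is a clique by Proposition~\ref{layerprop}, no edge joins non-adjacent layers by Lemma~\ref{layerlem}, and the edges between $\mathcal{L}_N$ and $\mathcal{L}_{N+1}$ are recovered from $S_N$ by Proposition~\ref{prop:uniqueSequences}; hence the assignment $G\mapsto(S_0,\dots,S_{h-1})$ is injective. By Proposition~\ref{prop:increaseSequences} its image lies in the set of tuples for which each $S_N=(b_1,\dots,b_{a_N})$ is weakly increasing, has values in $\{0,1,\dots,a_{N+1}\}$, and satisfies $b_{a_N}=a_{N+1}$.

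Next I would prove surjectivity onto that set. Given such a tuple, build $G$ by declaring each $\mathcal{L}_N$ a clique and joining the $s$-th vertex $u_s=m_N+s-1$ of $\mathcal{L}_N$ to the initial segment $[m_{N+1},m_{N+1}+b_s-1]$ of $\mathcal{L}_{N+1}$ (empty when $b_s=0$). Three things require verification. For closedness, since $m_{N+1}=u_{a_N}+1$ the set $\Linkp(u_s)$ is the interval $[u_s+1,m_{N+1}+b_s-1]$ (degenerating to $[u_s+1,u_{a_N}]$ when $b_s=0$); completeness follows because for $t>s$ the monotonicity $b_t\ge b_s$ makes $u_t$ adjacent to every vertex of $[m_{N+1},m_{N+1}+b_s-1]$, so Proposition~\ref{nbdinterval} applies. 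For connectedness, $b_{a_N}=a_{N+1}$ makes the last vertex of $\mathcal{L}_N$ adjacent to all of $\mathcal{L}_{N+1}$, so induction on $N$ shows $\mathcal{L}_0\cup\cdots\cup\mathcal{L}_N$ is connected. For the layer condition, every edge of $G$ has endpoints in the same or consecutive $\mathcal{L}$'s, so any path from $1$ to $v\in\mathcal{L}_N$ has length at least $N$, while the path through the last vertices $u_{a_0},u_{a_1},\dots,u_{a_{N-1}},v$ has length exactly $N$.

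Finally I would count. The sequences $S_N$ are chosen independently since they control disjoint portions of $E(G)$. For each $N$, fixing $b_{a_N}=a_{N+1}$ reduces the choice to a weakly increasing sequence $0\le b_1\le\cdots\le b_{a_N-1}\le a_{N+1}$, i.e.\ a multiset of size $a_N-1$ drawn from a set of size $a_{N+1}+1$, of which there are $\binom{a_{N+1}+a_N-1}{a_N-1}$; taking the product over $N=0,\dots,h-1$ yields the stated formula. I expect the delicate step to be the closedness check in the surjectivity argument: the weak monotonicity of $S_N$ is precisely what is needed to glue the two pieces of $\Linkp(u_s)$ into a single complete interval, whereas the remaining verifications are essentially bookkeeping with intervals and distances.
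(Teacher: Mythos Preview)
Your proposal is correct and follows essentially the same approach as the paper: encode a graph by its layer sequences $(S_0,\dots,S_{h-1})$, show via Propositions~\ref{prop:uniqueSequences} and~\ref{prop:increaseSequences} that this is a bijection onto the set of tuples of weakly increasing sequences ending at $a_{N+1}$, verify the constructed graph is closed using Proposition~\ref{nbdinterval} with monotonicity of $S_N$ supplying completeness of $\Linkp(u_s)$, and count via stars-and-bars. Your identification of the closedness check as the delicate step matches the paper's emphasis, and your counting argument is slightly more explicit than the paper's ``well known'' appeal.
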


\begin{proof}
Let $G$ satisfy (1), (2) and (3).  Each layer of $G$ is complete, and
every edge of $G$ connects to the same layer or an adjacent layer by
Lemma~\ref{layerlem}.  Then Proposition~\ref{prop:uniqueSequences}
shows that the edges of $G$ are uniquely determined by
$S_0,\dots,S_{h-1}$.

By Proposition~\ref{prop:increaseSequences}, each $S_N =
(b_1,b_2,\dots, b_{a_N})$ is an increasing sequence of nonnegative
integers of length $a_N$ that ends at $a_{N+1}$.  It is well known
that the number of such sequences equals the binomial coefficient
$\binom{a_{N+1}+a_N-1}{a_N-1}$.  

It follows that the product in the statement of the proposition is an
upper bound for the number of graphs satisfying (1), (2) and (3).

To complete the proof, we need to show that every sequence counted by
the product corresponds to a graph $G$ satisfying (1), (2) and (3).  
First note that the minimal element of $\mathcal{L}_N$ is
\[
m_N = a_0+\cdots+a_{N-1}+1
\]
when $N > 0$.  Now suppose we have sequences $S_0,\dots,S_{h-1}$,
where each $S_N = (b_1,b_2,\ldots, b_{a_N})$ is an increasing sequence
of nonnegative integers of length $a_N$ that ends at $a_{N+1}$.  This
determines a graph $G$ with $V(G) = [n]$ and the following edges:
\begin{itemize}
\item[(A)] All possible edges connecting elements in the same level
$\mathcal{L}_N$.
\item[(B)] For each $N = 0,\dots,h-1$, all edges $\{u_s,v\}$, where $u_s$
  is the $s^{\mathit{th}}$ vertex of $\mathcal{L}_N$ and $v$ is any
  vertex in the interval $[m_{N+1},m_{N+1}+b_s-1] \subseteq
  \mathcal{L}_{N+1}$ from 
  Proposition~\ref{prop:uniqueSequences}. 
\end{itemize}
Once we prove that $G$ is closed and connected with $\mathcal{L}_N$ as
its $N^{\mathit{th}}$ layer, the theorem will be proved.

Since $b_{a_N} = a_{N+1}$, we see that for $N = 0,\dots,h-1$, the last
elment of $\mathcal{L_N}$ connects to all elements of
$\mathcal{L}_{N+1}$.  This enables us to construct a path from $1$ to
any $u \in \mathcal{L}_N$ for $N = 1,\dots,h$.  It follows that $G$ is
connected and that all $u \in \mathcal{L}_N$ have distance at most $N$
from vertex $1$.  Since every edge of $G$ connects elements of
$\mathcal{L}_M$ to $\mathcal{L}_M$, $\mathcal{L}_{M+1}$, or
$\mathcal{L}_{M-1}$, any path connecting $1$ to $u\in \mathcal{L}_N$
must have length at least $N$.  It follows that $\mathcal{L}_N$ is
indeed the $N^{\mathit{th}}$ layer of $G$.

It remains to show that $G$ is closed with respect to the natural
labeling given by $V(G) = [n]$.  A vertex of $G$ is the
$s^{\mathit{th}}$ vertex $u_s$ of $\mathcal{L}_N$ for some $s$ and
$N$.  We will show that $\Linkp(u_s)$ satisfies
Proposition~\ref{nbdinterval}.  The formula \eqref{LNdesc} for
$\mathcal{L}_N$ and the description of the edges of $G$ given in (A)
and (B) make it clear that
\begin{align*}
\Linkp(u_s) &= [u_{s+1},a_0+\cdots+a_N] \cup
      [m_{N+1},m_{N+1}+b_s-1]\\ &= [u_{s+1},m_{N+1}+b_s-1], 
\end{align*}
where the second equality follows from $m_{N+1} = a_0+\cdots +a_N+1$.
To show that $\Linkp(u_s)$ is complete, take distinct vertices $v,w
\in \Linkp(u_s)$.  If both lie in $\mathcal{L}_N$ or
$\mathcal{L}_{N+1}$, then $\{v,w\} \in V(G)$ by (A).  Otherwise, we
may assume without loss of generality that $v = u_t$, $t \ge s$, and
$w \in [m_{N+1},m_{N+1}+b_s-1]$.  Note that $u_t$ links to every
vertex in $[m_{N+1},m_{N+1}+b_t-1]$ by (B).  We also have $b_s \le
b_t$ since $S_N$ is increasing.  It follows that $\{v,w\} = \{u_t,w\}
\in E(G)$.  Hence $\Linkp(u_s)$ is complete, so that $G$ is closed by
Proposition~\ref{nbdinterval}.
\end{proof}

\section{Local Clustering Coefficients} 
\label{clustering}

In a social network, one
can ask how often a friend of a friend is also a friend.  Translated
into graph theory, this asks how often a path of length two has an
edge connecting the endpoints of the path.  The illustration
\eqref{closedpicture} from the Introduction indicates that this should
be a frequent occurrence in a closed graph.   

There are several ways to quanitify the ``friend of a friend''
phenomenon.  For our purposes, the most convenient is the \emph{local
  clustering coefficient} of vertex $v$ of a graph $G$, which is
defined by
\[
C_v = \begin{cases}{\displaystyle\frac{\text{number of pairs of
        neighbors of $v$ connected by an edge}}{\text{number of pairs
        of neighbors of $v$}}} & \deg(v) \ge 2\\ 0 & \deg(v) \le
  1.\end{cases} 
\]
Local clustering coefficients are discussed in \cite[pp.\ 201--204]{N}.

\begin{proposition}
\label{cluster}
Let $v$ be a vertex of a closed graph $G$ of degree $d = \deg(v) \ge
2$.  Then the local clustering coefficient $C_v$ satisfies the
inequality
\[
C_v \ge \frac12 - \frac1{2(d-1)}.
\]
Furthermore, $d \ge 3$ implies that $C_v \ge \frac13$.
\end{proposition}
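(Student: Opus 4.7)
The strategy is to exploit the closed labeling to force edges among the neighbors of $v$, split according to labels above and below $v$, and then minimize the resulting lower bound over the possible splits.

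First, I would set $d^+ = |\Linkp(v)|$ and $d^- = |\Linkm(v)|$, so $d = d^+ + d^-$. The key observation is that both $\Linkp(v)$ and $\Linkm(v)$ induce complete subgraphs: if $j,k \in \Linkp(v)$ are distinct, the triple $j > v < k$ and the closed condition applied to $\{v,j\},\{v,k\}$ force $\{j,k\} \in E(G)$, and the symmetric argument (with $j < v > k$) handles $\Linkm(v)$. Hence the number of pairs of neighbors of $v$ joined by an edge is at least $\binom{d^+}{2} + \binom{d^-}{2}$, giving
\[
C_v \ge \frac{\binom{d^+}{2} + \binom{d^-}{2}}{\binom{d}{2}}.
\]

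Next, I would minimize the numerator over integer splits $d^+ + d^- = d$. Since $\binom{x}{2}$ is convex, the minimum is achieved when $d^+$ and $d^-$ are as balanced as possible. For $d$ even, the minimum is $2\binom{d/2}{2} = d(d-2)/4$, and dividing by $\binom{d}{2} = d(d-1)/2$ yields exactly $\frac{d-2}{2(d-1)} = \frac12 - \frac{1}{2(d-1)}$. For $d$ odd, the balanced split $\{(d-1)/2,(d+1)/2\}$ gives numerator $(d-1)^2/4$ and hence $C_v \ge \frac{d-1}{2d} = \frac12 - \frac{1}{2d}$; since $\frac{1}{2d} < \frac{1}{2(d-1)}$, this is strictly better than the claimed bound. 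So in both parities we get $C_v \ge \frac12 - \frac{1}{2(d-1)}$.

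For the second statement, I would plug in: when $d = 3$ the (odd) bound $\frac{d-1}{2d}$ evaluates to exactly $1/3$, and for $d \ge 4$ the function $\frac12 - \frac{1}{2(d-1)}$ is increasing in $d$ with value $1/3$ at $d=4$, so $C_v \ge 1/3$ in that range as well.

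The only real subtlety is the parity split, since the naive Jensen-style minimum of $\binom{x}{2}$ is sharp only when $d$ is even. This is dispatched by observing that the integer-constrained minimum for odd $d$ gives a strictly better constant than the even formula, so the uniform bound $\frac12 - \frac{1}{2(d-1)}$ survives. No other obstacles are expected.
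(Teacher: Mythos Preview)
Your proof is correct and follows essentially the same approach as the paper: split $N_G(v)$ into $\Linkp(v)$ and $\Linkm(v)$, observe that each is complete by the closed condition, and minimize $\binom{d^+}{2}+\binom{d^-}{2}$ over splits $d^+ + d^- = d$. The only cosmetic difference is that the paper handles the minimization via the real inequality $a^2+b^2 \ge d^2/2$ (equivalently $(a-b)^2 \ge 0$) and then revisits the integer constraint only for $d=3$, whereas you separate the even and odd cases from the outset; both arrive at the same bounds.
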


\begin{proof}
Pick a closed labeling of $G$ and let $a = |\Linkp(v)|$ and $b =
|\Linkm(v)|$.  Then $a+b = |\Link(v)| = \deg(v) = d$.   Since the
labeling is closed, any pair of vertices in $\Linkp(v)$ or in
$\Linkm(v)$ is connected by an edge.  It follows that at least 
\[
{\textstyle \frac12} a(a-1) + {\textstyle \frac12} b(b-1) 
\]
pairs of neighbors of $v$ are connected by an edge.  Since the total
number of such pairs is $\frac12 d(d-1)$ and $d = a+b$, we obtain 
\begin{equation}
\label{Cvineq}
C_v \ge \frac{a(a-1) + b(b-1)}{d(d-1)} = \frac{a^2+b^2-d}{d(d-1)} \ge
\frac{\ {\textstyle\frac12} d^2 - d\ }{d(d-1)} = \frac12 -
\frac1{2(d-1)}, 
\end{equation}
where we use $a^2+b^2-\frac12 d^2 =  \frac12(a-b)^2 \ge 0$.  When $d
\ge 4$, this inequality for $C_v$ easily gives $C_v \ge \frac13$.
When $d = 3$, then $a+b = 3$, $a,b \in \Z$, implies that $a^2+b^2 \ge
5$, in which case the left half of \eqref{Cvineq} gives $C_v \ge
\frac{5-3}{3(3-1)} = \frac13$.  
\end{proof}

A global version of the clustering coefficient defined by Watts and
Strogatz is
\[
C_{\mathrm{WS}} = \frac1n \sum_{v \in V(G)} C_v, \quad n = |V(G)|.
\]
(See reference [323] of \cite{N}.  A different global clustering
coefficient is discussed in \cite[pp.\ 199--204]{N}.)   To estimate
$C_{\mathrm{WS}}$ for a closed graph, we need the following lemma.  

\begin{lemma}
\label{lemma:diameterAndCluster}
Let $G$ be a connected closed graph.
\begin{enumerate}
\item Set $h=\Diam(G)$ and let $c$ be the number of vertices $v \in G$
  with $\deg(v)=2$ and $C_v=0$. Then $c\leq h-1$.
\item $G$ has at most two leaves.
\end{enumerate}
\end{lemma}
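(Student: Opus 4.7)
My plan is to fix a closed labeling of $G$ with layers $L_0=\{1\}, L_1, \dots, L_h$ and, for each constraint on a low-degree vertex, read off exactly which layers and relative positions its few neighbors can occupy. The main reusable tool is that both $\Linkp(v)$ and $\Linkm(v)$ are complete subgraphs: $\Linkp(v)$ by Proposition~\ref{nbdinterval}, and $\Linkm(v)$ because any two smaller neighbors $j,k$ of $v$ give the closed pattern $j<v>k$, hence $\{j,k\}\in E(G)$.

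For part~(1), let $v\in L_N$ satisfy $\deg(v)=2$ and $C_v=0$, and let $u,w$ be its two neighbors. Completeness of the upper and lower neighborhoods forces $u<v<w$ with $\{u,w\}\notin E(G)$. By Lemma~\ref{layerlem} there are then four cases, indexed by whether each of $u,w$ lies in $L_N$ or in the adjacent layer. Whenever $w\in L_N$, completeness of $L_N$ together with $\deg(v)=2$ pins $L_N=\{v,w\}$, and then the last vertex of $L_{N-1}$, which connects to every element of $L_N$ by Proposition~\ref{prop:increaseSequences}(1), either coincides with $u$ and forces $\{u,w\}\in E(G)$ or is a third neighbor of $v$; either way, contradiction. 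A symmetric argument rules out $u\in L_N$: it would force $L_N=\{u,v\}$, but then $v$ has no neighbor in $L_{N-1}$ despite lying at distance $N$ from $1$. The only surviving configuration is $u\in L_{N-1}$ and $w\in L_{N+1}$, and it forces $L_N=\{v\}$ since $v$ has no $L_N$-neighbors. Finally, neither $L_0$ nor $L_h$ can host such a $v$: all neighbors of $1\in L_0$ lie in $\Linkp(1)$, and for $v\in L_h$ all neighbors lie in $\Linkm(v)$, so the two neighbors are connected and $C_v=1$. Hence the bad vertices occupy distinct singleton layers among $L_1,\dots,L_{h-1}$, giving $c\le h-1$.

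For part~(2), a parallel analysis handles leaves. If $v=1$ is a leaf, its unique neighbor lies in $L_1$ and $|L_1|=1$. If $v\ne 1$ is a leaf in $L_N$, its unique neighbor must lie in $L_{N-1}$ (so that $v$ is at distance $N$ from $1$), so $v$ has no neighbors in $L_N$ or $L_{N+1}$; completeness of $L_N$ then gives $L_N=\{v\}$, and the absence of a forward neighbor of $v$ prevents any vertex from being at distance $N+1$ from $1$, so $N=h$. Hence any non-$1$ leaf is the unique element of $L_h$, giving at most one such leaf and at most two leaves in total.

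The main obstacle I anticipate is the case analysis in part~(1), especially the mixed cases where exactly one of $u,w$ lies in $L_N$. In those cases $\{u,w\}\notin E(G)$ is not immediately contradictory, and one must invoke Proposition~\ref{prop:increaseSequences}(1) to produce either an unwanted edge or an unaccounted-for third neighbor of $v$; the remaining cases collapse quickly from completeness of the layer and of the upper and lower neighborhoods.
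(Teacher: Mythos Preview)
Your proposal is correct. For part~(1) you reach the same key conclusion as the paper---each vertex with $\deg(v)=2$ and $C_v=0$ occupies a singleton layer strictly between $L_0$ and $L_h$---but the paper gets there in one stroke rather than a four-way case split: if some $s\ne v$ lies in $L_N$, then $d=\max L_{N-1}$ is adjacent to both $v$ and $s$ by Proposition~\ref{layerprop}(2), so $N_G(v)=\{s,d\}$ with $\{s,d\}\in E(G)$, contradicting $C_v=0$. Your case analysis recovers this (and incidentally pins down where both neighbors live), but the extra information is not needed. One small wrinkle: in the sub-case $u,w\in L_N$ your claim ``$L_N=\{v,w\}$'' is not literally true, though the contradiction still follows immediately from completeness of $L_N$; you may want to dispatch that sub-case first.

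For part~(2) your argument is genuinely different from the paper's. The paper assumes three leaves $u<v<w$ and uses that shortest paths in a closed labeling are directed (a fact imported from \cite{H} and \cite{CE}) to force $v'<v$ and $v'>v$ simultaneously. You instead stay entirely within the layer framework: a leaf $v\ne 1$ must have its unique neighbor in $L_{N-1}$, hence $L_N=\{v\}$ and $L_{N+1}=\Linkp(v)=\emptyset$, so $N=h$. Your route is more self-contained (no appeal to directedness of shortest paths) and reuses exactly the machinery already on hand; the paper's route is shorter once that external fact is available.
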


\begin{proof}
For (1), fix a closed labeling for $G$ with $V(G) = [n]$ and pick
$v\in V(G)$ with $\deg(v) = 2$ and $C_v = 0$.  We claim that $v$ is in
a layer of its own.  To see why, let $v \in L_N$ and suppose there is
$s\in L_N$ with $s\neq v$.  Then $\{v,s\}\in E(G)$ since layers are
complete by Proposition~\ref{layerprop}(1).  Furthermore, $|L_N| \ge
2$, so $N > 0$.  Then $\{s,d\}$, $\{v,d\} \in E(G)$ for $d =
\max\{L_{N-1}\}$ by Proposition~\ref{layerprop}(2).  Since $\deg(v) =
2$, we must have $\Link(v) = \{s,d\}$, and then $\{s,d\} \in E(G)$
contradicts $C_v = 0$.  Thus $\{v\}$ is a layer when $\deg(v) = 2$ and
$C_v = 0$.

Note that if $\{v\} = L_0$, then the two vertices in $\Link(v) = L_1$
would be linked by an edge.  The same holds if $\{v\} = L_h$, for here
the two vertices would be in $L_{h-1}$ since $L_h$ is the highest
layer by Proposition~\ref{diameters}(1). It follows that each of the $c$
vertices with $\deg(v)=2$ and $C_v=0$ lies in a separate layer
distinct from $L_0$ or $L_h$.  Since there are only $h-1$ intermediate
layers, we must have $c\leq h-1$.  

For (2), assume $G$ has leaves $u,v,w$ and fix a
closed labeling of $G$.  We may assume $u < v < w$, and let $u',v',w'$
be the unique vertices adjacent to $u,v,w$ respectively.  A shortest
path from $u$ to $v$ is directed (see \cite{H} or Proposition 2.1 of
\cite{CE}) and must pass through $u'$ and $v'$, hence $u < u' \le v' <
v$ since $u < v$.  The same argument applied to $v$ and $w$ would
imply $v < v' \le w' < w$.  Thus $v' < v$ and $v < v'$, so three
leaves cannot exist. 
\end{proof}

We can now estimate the clustering coefficient $C_{\mathrm{WS}}$ of a
closed graph.

\begin{theorem}
\label{CWSprop}
If $G$ is connected and closed with $n > 1$ vertices and
diameter~$h$, then  
\[
C_{\mathrm{WS}} \geq \frac{1}{3}-\frac{h+1}{3n}.
\]
\end{theorem}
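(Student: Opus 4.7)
The plan is to partition the vertices into \emph{good} vertices (those with $C_v \ge 1/3$) and \emph{bad} vertices (those with $C_v < 1/3$), lower-bound the sum using only the good vertices, and use Lemma~\ref{lemma:diameterAndCluster} to bound the number of bad vertices by $h+1$.

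First I would classify vertices by degree. Since $G$ is connected with $n>1$, every vertex has degree at least $1$. If $\deg(v)\ge 3$, then Proposition~\ref{cluster} gives $C_v \ge 1/3$. If $\deg(v)=2$, there is exactly one pair of neighbors, so $C_v \in \{0,1\}$; thus either $C_v=0$ or $C_v\ge 1/3$. If $\deg(v)=1$, then $C_v=0$ by definition. Consequently, the bad vertices consist of leaves together with degree-$2$ vertices whose two neighbors are non-adjacent.

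Next I would count the bad vertices. By Lemma~\ref{lemma:diameterAndCluster}(2), there are at most $2$ leaves, and by Lemma~\ref{lemma:diameterAndCluster}(1), at most $h-1$ vertices with $\deg(v)=2$ and $C_v=0$. Hence the total number of bad vertices is at most $(h-1)+2 = h+1$.

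Finally I would assemble the bound. Every good vertex contributes at least $1/3$ to the sum $\sum_{v} C_v$, and every bad vertex contributes at least $0$, giving
\[
\sum_{v\in V(G)} C_v \;\ge\; \tfrac{1}{3}\bigl(n-(h+1)\bigr).
\]
Dividing by $n$ yields $C_{\mathrm{WS}} \ge \tfrac{1}{3}-\tfrac{h+1}{3n}$, as required. The argument is largely a bookkeeping exercise once the two preceding results are in hand; there is no real obstacle, but the one subtlety to flag is the observation that a degree-$2$ vertex has $C_v\in\{0,1\}$, so the dichotomy ``$C_v\ge 1/3$ or $C_v=0$'' holds uniformly across all vertices of degree $\ge 1$, which is precisely what makes the two parts of Lemma~\ref{lemma:diameterAndCluster} suffice.
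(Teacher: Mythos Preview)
Your proof is correct and follows essentially the same approach as the paper: the paper partitions $V(G)$ into four sets $\mathcal{A},\mathcal{B},\mathcal{C},\mathcal{D}$ (degree $\ge 3$; degree $2$ with $C_v=1$; degree $2$ with $C_v=0$; leaves), which is exactly your good/bad dichotomy written out more explicitly, and then applies Proposition~\ref{cluster} and Lemma~\ref{lemma:diameterAndCluster} in the same way you do. Your observation that $C_v\in\{0,1\}$ when $\deg(v)=2$ is the same content as the paper's split into $\mathcal{B}$ and $\mathcal{C}$.
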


\begin{proof}
Since $n > 1$ and $G$ is connected, all vertices of $G$ have degree
$\ge 1$.  Thus we can write $V(G)$ as the disjoint union 
\[
V(G) = \mathcal{A} \cup \mathcal{B} \cup \mathcal{C} \cup \mathcal{D},
\]
where $\mathcal{A}$ consists of vertices of degree $\ge3$,
$\mathcal{B}$ consists of vertices of degree $2$ with $C_v = 1$,
$\mathcal{C}$ consists of vertices of degree $2$ with $C_v = 0$, and
$\mathcal{D}$ consists of the leaves (which have $C_v = 0$).   Since
$C_v \ge \frac13$ for $v \in \mathcal{A}$ by
Proposition~\ref{cluster}, we have 
\[
C_{\mathrm{WS}} \ge \frac1n\Big( \frac13\cdot|\mathcal{A}| + 1\cdot
|\mathcal{B}| + 0\cdot |\mathcal{C}| + 0\cdot  |\mathcal{D}| \Big) \ge
\frac{|\mathcal{A}| +  |\mathcal{B}|}{3n}\ 
= \frac{n - (|\mathcal{C}| +  |\mathcal{D}|)}{3n}.
\]
Then we are done since $|\mathcal{C}| \leq h-1$ and $|\mathcal{D}| \le
2$ by Lemma~\ref{lemma:diameterAndCluster}.
\end{proof}

By Theorem~\ref{CWSprop}, the clustering coefficient
$C_{\mathrm{WS}}$ is large when the diameter is small compared to the
number of vertices.  At the other extreme, both sides of the
inequality in Proposition~\ref{CWSprop} are zero when $G$ is a path
graph.

\section*{Acknowledgements}

We are grateful to Amherst College for the Post-Baccalaureate Summer
Research Fellowship that supported the writing of this paper.  Thanks
also to Amy Wagaman for suggesting that we look at clustering
coefficients.

\end{document}